\newcommand{\R}{{\mathbb R}} 
\newcommand{\del}{\partial}
\newcommand{\diver}{\operatorname{div}} 
\newcommand{\cc}{\boldsymbol{c}}
\newcommand{\dx}{\textnormal{d}x}
\newcommand{\dt}{\textnormal{d}t}
\newcommand{\dtau}{\textnormal{d}\tau}
\numberwithin{equation}{section}
\newtheorem{theorem}{Theorem}
\newtheorem{lemma}[theorem]{Lemma}
\newtheorem{remark}[theorem]{Remark}
\newtheorem{definition}{Definition}
\begin{document}

\title[Absence of anomalous dissipation]{Absence of anomalous dissipation for weak solutions of the Maxwell--Stefan system}
\author{Luigi C. Berselli, Stefanos Georgiadis and Athanasios E. Tzavaras} 
\address{Luigi C. Berselli, Dipartimento di Matematica, Universit\`a di Pisa, Via F. Buonarroti 1/c, I56127, Pisa, Italy }
\email{luigi.carlo.berselli@unipi.it}
\address{Stefanos Georgiadis, Computer, Electrical and Mathematical Science and Engineering Division, King Abdullah University of Science and Technology (KAUST), Thuwal 23955-6900, Saudi Arabia and Institute for Analysis and Scientific Computing, Vienna University of Technology, Wiedner Hauptstra\ss{}e  8-10, 1040 Wien, Austria}
\email{stefanos.georgiadis@kaust.edu.sa}
\address{Athanasios E. Tzavaras, Computer, Electrical and Mathematical Science and Engineering Division, King Abdullah University of Science and Technology (KAUST), Thuwal 23955-6900, Saudi Arabia}
\email{athanasios.tzavaras@kaust.edu.sa}
\date{\today}
\begin{abstract}
    In this paper we give a short and self-contained proof of the fact that weak solutions to the Maxwell-Stefan system automatically satisfy an entropy equality, establishing the absence of anomalous dissipation.
\end{abstract}
\keywords{Gas mixture, Maxwell--Stefan equations, isothermal model, nonequilibrium thermodynamics, anomalous dissipation.}
\subjclass[2000]{35D30, 35Q35, 76N15, 76R50, 76T30.}

\maketitle
\section{Introduction}

Energy balance is an important tool in the understanding of fluid systems. Already from the work of Kolmogorov in the 1940s, it became evident that the evolution of energy in fluids plays a crucial role in the mechanism of creation of turbulence. To this extent, establishing an energy equality implies that the flow is not chaotic, as there is no turbulence (with some caveat due to the variety of phenomena related with fluid flows).  It was formalized in the famous Onsager conjecture, stating that H\"older continuous solutions of the incompessible Euler equations flows with exponent $\alpha > 1/3$ conserve the kinetic energy.
The positive part of the conjecture was proved in  Constantin, E, and
Titi~\cite{CoETi94}, according to which weak solutions to the Euler
equations with some additional regularity conserve their energy. The
negative part of the conjecture was settled in an important work by
Isett \cite{Isett18} and Buckmaster \textit{et al.} \cite{BDLSV19} building on several previous works (see references therein).
Following the work \cite{CoETi94},  several studies investigated regularity criteria under which an energy conservation (or dissipation)
identity can be attained, including  the incompressible~\cite{DuRo00} or compressible~\cite{Yu17} Navier-Stokes equations, the system of magnetohydrodynamics~\cite{CaKlSt97}, the Euler-Korteweg system~\cite{DeGwSwTz18} and general conservation laws~\cite{GwMiSw18}. See \cite{Ber2021} for a review of  results on incompressible flows under various regularity assumptions that induce conservation of energy.

The study of multicomponent systems is an active research field of mathematical fluid mechanics for compressible flows. 
The Maxwell-Stefan system,~ see \eqref{equation} below,  is one of the simplest models describing the diffusive transport
of the components of a gaseous mixture with zero mean flow.  
It consists of a system of continuity equations, one for the mass fraction of each component, together with
an algebraic system describing frictional balance among the components, and coupling the  molar fluxes with the mass fractions.
The system is equipped with an entropy dissipation structure 
obtained from thermodynamic considerations which plays an important role in its analysis.
It induces a gradient flow structure \cite{HuJueTz22} which renders it formally a parabolic system. 
Local existence of smooth solutions is shown in \cite{Bothe} while global existence of weak solutions is shown in \cite{JueSt13}. At present it is not
known if smooth solutions break down and in what norm. 

The solutions constructed in \cite{JueSt13} inherit from construction an {\it energy dissipation inequality}.
The objective of the present work is to show that weak solutions to the Maxwell-Stefan system satisfy an {\it energy dissipation identity}, 
making a first qualitative connection with similar results about standard turbulence theory. The main result we prove is the absence of anomalous dissipation for weak solutions to the Maxwell-Stefan system in the same class where  we have global existence, see Theorem~\ref{theorem}. 
This property is obtained as a consequence of the regularity of the weak solutions, and of the structure of the equations, without additional assumptions. 

\subsection{Setting} We consider the system of Maxwell-Stefan equations which describes multicomponent diffusive phenomena in a gas mixture. The system consists of the partial differential equations 
\begin{equation}
    \label{equation}\del_tc_i+\diver J_i=0, \quad \textnormal{in }\Omega\times(0,T),
\end{equation} 
describing the evolution of the mass fractions $c_i$ for each
component $i=1,\dots,n$,  where  $J_i$ is the molar flux of the $i$-th
component and the evolution takes place in   a bounded domain $\Omega\subset \R^3$ with smooth boundary $\partial\Omega$.

Initially, the mass fractions are given by \begin{equation}\label{IC}
    c_i(x,0)=c_i^0(x) \quad \textnormal{ in } \Omega,
\end{equation} where $c_i^0(x)\geq0$ and $\sum_{i=1}^nc_i^0(x)=1$ and we assume that no mass can enter or leave the domain, i.e., that  
\begin{equation}\label{BC}
    J_i\cdot\nu=0, \quad \textnormal{ on } \del\Omega\times(0,T).
\end{equation} where $\nu$ is the exterior normal unit vector to $\del\Omega$.

The molar fluxes $J_i$ are solutions of the linear system

\begin{equation}\label{sys}
    -\sum_{j\not=i,j=1}^n\frac{c_jJ_i-c_iJ_j}{D_{ij}}=\nabla c_i\quad \textnormal{in }\Omega\times(0,T),
\end{equation}
\noindent
where $D_{ij}$, for $i\not=j$, are positive and symmetric coefficients
describing binary interactions between the $i$-th and $j$-th
components, and the molar fluxes are subject to the constraint
\begin{equation}\label{constr1}
    \sum_{i=1}^nJ_i=0.
\end{equation}

In the case of nonvanishing concentrations, the molar fluxes $J_i$ are
defined as the product of the mass fractions $c_i$ and the diffusional
velocities $u_i$, i.e. $J_i:=c_iu_i$. In \cite{HuJueTz22} it was shown
that system \eqref{sys}-\eqref{constr1} can be rewritten in terms of
the quantities
\[ m_i:=\sqrt{c_i}u_i,\]
which make sense even for $c_i=0$, so that $J_i$ should be understood as $J_i:=\sqrt{c_i}m_i$ and $m_i$ can be found by inverting the algebraic linear system \begin{equation}\label{alg-system}
    -\sum_{j=1}^nA_{ij}m_j=2\nabla \sqrt{c_i},
\end{equation} with \begin{equation*}
    A_{ij}=\begin{cases}
    -\frac{\sqrt{c_ic_j}}{D_{ij}}, & i\not=j
    \\
    \sum_{k\not=i}\frac{c_k}{D_{ik}}, & i=j
\end{cases},
\end{equation*} 

\noindent
subject to the constraint
\begin{equation}\label{constr2}
    \sum_{i=1}^n\sqrt{c}_im_i=0.
\end{equation}

The main difficulty of the Maxwell-Stefan system is that system \eqref{sys} (and hence also when reformulated as \eqref{alg-system}) is singular and thus yields no unique solution. Nevertheless, under the additional constraint \eqref{constr1} (equivalently \eqref{constr2}), which ensures conservation of total mass, the linear system can be inverted on its image by means of the Bott-Duffin inverse (for more details we refer to \cite{HuJueTz22}). 

To properly define the class of solution which we will handle, we define a weak solution as follows: 
\begin{definition}
\label{def:def-weak}
    We call the vector field $\cc=(c_1,\dots,c_n)$, such that $c_i\in L^2(0,T;H^1(\Omega))$ and $\partial_t c_i\in L^2(0,T;(H^1(\Omega))^*)$, a weak solution of \eqref{equation}-\eqref{sys} if:
    \begin{itemize}
        \item [(i)] it satisfies \eqref{equation} in the weak sense, i.e. \begin{equation} \label{wk-form}
            \int_0^T\int_\Omega c_i\del_t\varphi_i \,\dx\dt + \int_0^T\int_\Omega \sqrt{c_i}m_i\cdot\nabla\varphi_i \,\dx\dt=0,
        \end{equation} for all $i=1,\dots,n$ and for all $\varphi_i\in C^1_c(\overline{\Omega}\times(0,T))$, where $m_i=\sqrt{c_i}u_i$ is the solution of \eqref{alg-system}-\eqref{constr2};
        \item[(ii)] $c_i\in L^\infty(0,T;L^\infty(\Omega))$, for all $i=1,\dots,n$,;
        \item [(iii)] $c_i\geq0$ with $\sum_{i=1}^nc_i=1$, a.e. in $\Omega\times[0,T]$ for all $i=1,\dots,n$,;
        \item [(iv)] $\sqrt{c_i}\in L^2(0,T;H^1(\Omega))$ and $c_i\in C([0,T];L^2(\Omega))$ for all $i=1,\dots,n$, so that the initial conditions are satisfied in the strong $L^2(\Omega)$ sense. 
    \end{itemize}
\end{definition}

\begin{remark}
    Note that from the weak formulation \eqref{wk-form} one can obtain
    the one considered in \cite{HuJueTz22}, namely \[ \int_\Omega
      c_i(T)\psi_i(T) \,\dx -\int_\Omega c_i^0\psi_i(0) \,\dx
      -\int_0^T\int_\Omega c_i\del_t\psi_i \,\dx
      \dt-\int_0^T\int_\Omega \sqrt{c_i}m_i\cdot\nabla\psi_i \,\dx
      \dt=0,
    \]
    for all $\psi_i\in C^1([0,T];C^1(\overline{\Omega}))$, by choosing in \eqref{wk-form} $\varphi_i(x,t) = \eta_\sigma(t)\psi_i(x,t)$, where $\eta_\sigma$ is the cut-off in time defined by \begin{equation}\label{cut-off}
        \eta_\sigma(t) = \begin{cases}
        \frac{t-\sigma}{\sigma}, & \sigma \leq t \leq 2\sigma \\
        1, & 2\sigma \leq t \leq T-2\sigma \\
        \frac{T-t-\sigma}{\sigma}, & T-2\sigma \leq t \leq T-\sigma \\
        0, & t\leq \sigma, ~ t\geq T-\sigma,
        \end{cases}
    \end{equation} for $0<\sigma<\frac{T}{4}$ and $\psi_i\in C^1([0,T];C^1(\overline{\Omega}))$ and then pass to the limit $\sigma \to 0$.
\end{remark}

We note, here, that since weak solutions satisfy $c_i\in L^\infty(0,T;L^\infty(\Omega))$ and $\nabla\sqrt{c_i}\in L^2(0,T;L^2(\Omega))$, the inversion of system \eqref{alg-system} yields $m_i\in L^2(0,T;L^2(\Omega))$.

The existence of such a weak solution, satisfying all the above conditions was proven in~\cite{JueSt13}.
\noindent
In addition to the properties of Definition~\ref{def:def-weak}, weak solutions satisfy an entropy inequality. Let us define the quantity 
\begin{equation}\label{entropy}
    H(\cc):=\sum_{i=1}^n\int_\Omega c_i(\ln{c_i}-1)\dx,
\end{equation} 
which is the entropy functional associated to the system. Notice that
$ H(\cc)\leq0$, since $0\leq c_i\leq1$ almost everywhere. Moreover,
the entropy is well-defined even for zero concentrations, since the
limit of the function $s\ln s$, as $s\to0^{+}$, is equal to zero. Previous works, especially \cite{JueSt13,HuJueTz22}, have shown that weak solutions, as in Definition~\ref{def:def-weak}, satisfy the following entropy inequality: 
\begin{equation}
    \label{entropy-ineq} H(\cc(T))+\frac{1}{2}\sum_{i,j=1}^n\int_0^T\int_\Omega\frac{c_ic_j}{D_{ij}}|u_i-u_j|^2 \,\dx \dt\leq H(\cc^0),
\end{equation}
where the dissipation term can be rewritten in terms of $\sqrt{c_i}$ and $m_i$ as follows: 
\[ 
\frac{1}{2}\sum_{i,j=1}^n\int_0^T\int_\Omega\frac{1}{D_{ij}}|\sqrt{c_j}m_i-\sqrt{c_i}m_j|^2\,\dx\dt,
\] and thus is well-defined, since $c_i\in L^\infty(0,T;L^\infty(\Omega))$ and $m_i\in L^2(0,T;L^2(\Omega))$. In this work, we will show that, in fact, weak solutions to the Maxwell-Stefan system as in Definition~\ref{def:def-weak} satisfy~\eqref{entropy-ineq} as an identity. It should be noted that, according to the results from~\cite{GeKiTz23}, weak solutions are in fact renormalized solutions, meaning that they further satisfy the identity \begin{equation}\label{renorm}
    \del_t\beta(c_i) + \diver(\beta'(c_i)c_iu_i) = c_iu_i\cdot\nabla\beta'(c_i),
\end{equation} in the weak sense, for all functions $\beta\in C^2([0,+\infty))$. Even though a formal computation suggests that the entropy identity could be obtained by choosing in \eqref{renorm} $\beta(s) = s\ln{s}-s$, such a $\beta$ is not admissible, due to the fact that $s\ln{s}-s$ is not a $C^1$ function up to $s=0$. To this degree, the aim of this paper is to make this formal computation rigorous.
\subsection{Main result}
The fact that weak solutions satisfy instead of~\eqref{entropy-ineq} the entropy 
equality~\eqref{entropy-eq}, is known in the literature as ``absence of anomalous dissipation'' and it is generally a basic, but additional,  assumptions used in the proofs of weak-strong uniqueness results (see Theorem~1 in~\cite{HuJueTz22}). In particular, this assumption is at the basis of the relative entropy method, which assumes that the strong solution satisfies an entropy identity, allowing for the derivation of the relative entropy inequality.

In this work we prove that \textit{all} weak solutions satisfy the entropy equality, implying that there is no anomalous dissipation. This can be interpreted in analogy with incompressible fluids as the fact that the flow is not turbulent. Since the entropy equality is ``formally'' obtained by testing the equations by $\ln{c_i}$ and integrating by parts over the domain, the essential steps will be the ``rigorous'' justification of these computations. As we will see, this is not just a matter of regularity of the solution: the fact that the solution may not be bounded away from zero (it is not enough simply to assume that $c_i>0$ since estimates involving the logarithm are not uniform near zero and one should ask the unrealistic $c_i\geq \overline{c}>0$) is a peculiar issue and makes the justification non trivial. 

The main result of this paper is, then, the following:
\begin{theorem}\label{theorem}
    Let $\cc$ be a weak solution to the Maxwell-Stefan system \eqref{equation}-\eqref{sys} and let the initial data be such that $c_i^0(x)\geq0$ and $\sum_{i=1}^nc_i^0(x)=1$ a.e. in $\Omega$. Then $\cc$ satisfies the entropy identity 
    \begin{equation}
        \label{entropy-eq} H(\cc(t))+\frac{1}{2}\sum_{i,j=1}^n\int_0^t\int_\Omega\frac{c_ic_j}{D_{ij}}|u_i-u_j|^2 \,\dx \dt=H(\cc^0)\qquad \forall\,t\in(0,T].
    \end{equation}
\end{theorem}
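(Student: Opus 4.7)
The plan is to rigorously realize the formal computation of testing \eqref{equation} with $\ln c_i$, summing in $i$, and integrating over $\Omega\times(0,t)$. The central obstruction is that $\ln c_i$ is not well-defined where $c_i=0$, with no uniform $H^1$-bound as one approaches the zero set. I would regularize by working with $\ln(c_i+\delta)$ for $\delta>0$, which lies in $L^2(0,T;H^1(\Omega))$ since $\nabla\ln(c_i+\delta)=\nabla c_i/(c_i+\delta)$ is $L^2$-bounded by $\delta^{-1}\|\nabla c_i\|_{L^2(0,T;L^2)}$, and subsequently pass to the limit $\delta\to 0^+$.

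For fixed $\delta>0$, define $F_\delta(s):=(s+\delta)(\ln(s+\delta)-1)$, which is $C^2$ on $[0,\infty)$ with $F_\delta'(s)=\ln(s+\delta)$ and $F_\delta'$ Lipschitz on $[0,1]$ with constant $\delta^{-1}$. Together with the regularity $c_i\in L^2(0,T;H^1(\Omega))$ and $\del_t c_i\in L^2(0,T;(H^1(\Omega))^*)$, the standard chain rule in the Gelfand triple framework yields
\begin{equation*}
\int_\Omega F_\delta(c_i(t))\,\dx - \int_\Omega F_\delta(c_i^0)\,\dx = \int_0^t \langle \del_t c_i,\ln(c_i + \delta) \rangle\,\dtau.
\end{equation*}
Since $\del_t c_i=-\diver J_i$ holds in the $L^2(0,T;(H^1(\Omega))^*)$ sense, the weak formulation \eqref{wk-form} is equivalent to $\langle \del_t c_i,\varphi\rangle=\int_\Omega \sqrt{c_i}\,m_i\cdot\nabla\varphi\,\dx$ for every $\varphi\in H^1(\Omega)$ and a.e.\ $\tau$. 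Choosing $\varphi=\ln(c_i+\delta)(\cdot,\tau)$ and using $\nabla c_i=2\sqrt{c_i}\nabla\sqrt{c_i}$ combines with the chain-rule identity to give
\begin{equation*}
\int_\Omega F_\delta(c_i(t))\,\dx - \int_\Omega F_\delta(c_i^0)\,\dx = \int_0^t\int_\Omega \frac{2c_i}{c_i+\delta}\,m_i\cdot\nabla\sqrt{c_i}\,\dx\,\dtau.
\end{equation*}

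Next I pass to the limit $\delta\to 0^+$ by dominated convergence on both sides. On the left, $F_\delta(c_i)$ is uniformly bounded on $[0,1]$ (the only subtlety being $\delta\ln\delta\to 0$) and converges pointwise to $c_i(\ln c_i-1)$ with the convention $0\ln 0=0$, so the left side tends to $H(\cc(t))-H(\cc^0)$. On the right, $2c_i/(c_i+\delta)\to 2$ on $\{c_i>0\}$ and $\to 0$ on $\{c_i=0\}$; using the standard property that $\nabla\sqrt{c_i}=0$ a.e.\ on the level set $\{c_i=0\}$, the integrand converges pointwise a.e.\ to $2\,m_i\cdot\nabla\sqrt{c_i}$ and is dominated by $2|m_i|\,|\nabla\sqrt{c_i}|\in L^1(\Omega\times(0,t))$. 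Summing over $i$ produces
\begin{equation*}
H(\cc(t))-H(\cc^0)=2\sum_{i=1}^n\int_0^t\int_\Omega m_i\cdot\nabla\sqrt{c_i}\,\dx\,\dtau.
\end{equation*}

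To close, I multiply \eqref{alg-system} by $m_i$, sum over $i$, and expand via the definition of $A_{ij}$ together with the symmetry $D_{ij}=D_{ji}$:
\begin{equation*}
-2\sum_{i=1}^n m_i\cdot\nabla\sqrt{c_i} = \sum_{i,j=1}^n A_{ij}\,m_i\cdot m_j = \frac{1}{2}\sum_{i,j=1}^n\frac{|\sqrt{c_j}m_i-\sqrt{c_i}m_j|^2}{D_{ij}},
\end{equation*}
which when inserted into the preceding display is precisely \eqref{entropy-eq}. I expect the principal obstacle to be neither the chain rule (standard once $\delta>0$ decouples the singularity) nor the algebraic closure, but the passage to the limit on $\{c_i=0\}$: it is exactly the structural fact that $\nabla\sqrt{c_i}$ vanishes almost everywhere there, which is what justifies the reformulation \eqref{alg-system} via the variables $m_i=\sqrt{c_i}u_i$, that prevents any anomalous dissipation from being hidden in the degenerate region.
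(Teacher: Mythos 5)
Your proof is correct, and it reaches the paper's identity by a genuinely different (more abstract) route. The paper never invokes a ready-made chain rule: it tests the weak formulation with $(\eta_\sigma(t)\ln(c_i^\epsilon+\delta))^\epsilon$, where $\eta_\sigma$ is an explicit time cut-off and the superscript $\epsilon$ denotes mollification in time, and then passes to the limits $\epsilon\to0^+$, $\sigma\to0^+$, $\delta\to0^+$ in succession, using the $L^2$-continuity in time (their Lemma~\ref{lemma}) to identify the boundary terms at $t=0$ and $t=T$. You instead compress the $\epsilon$- and $\sigma$-limits into a single appeal to the chain rule for $F_\delta(s)=(s+\delta)(\ln(s+\delta)-1)$ in the Gelfand triple $H^1(\Omega)\hookrightarrow L^2(\Omega)\hookrightarrow H^1(\Omega)^*$; after that, your $\delta\to0^+$ step and the algebraic identity $-2\sum_i m_i\cdot\nabla\sqrt{c_i}=\sum_{i,j}A_{ij}m_i\cdot m_j=\tfrac12\sum_{i,j}D_{ij}^{-1}|\sqrt{c_j}m_i-\sqrt{c_i}m_j|^2$ coincide with the paper's Step~3 and final computation. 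Two remarks on the trade-off. First, the chain-rule lemma for a general convex $C^{1,1}$ function of a $u\in L^2(0,T;V)\cap H^1(0,T;V^*)$ with $F'(u)\in L^2(0,T;V)$ is true but not the textbook $F(s)=s^2/2$ case; it must be cited precisely (e.g.\ Alt--Luckhaus, or Rou\v{b}\'{\i}\v{c}ek's monograph), and its proof is essentially the time-mollification argument the paper carries out by hand --- so you are outsourcing rather than avoiding the technical core, which is at odds with the paper's stated aim of self-containedness but makes for a shorter argument. You do verify the relevant hypotheses ($F_\delta'$ Lipschitz with constant $\delta^{-1}$ on the range of $c_i$, $\ln(c_i+\delta)\in L^2(0,T;H^1(\Omega))$), which is the right checklist. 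Second, your explicit invocation of $\nabla\sqrt{c_i}=0$ a.e.\ on $\{c_i=0\}$ to pass $2c_i/(c_i+\delta)\to 2$ inside the dissipation integral is a point where you are actually more careful than the paper, whose claimed a.e.\ convergence in its Step~3 silently relies on the same fact on the degenerate set.
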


\section{Proof of Theorem \ref{theorem}}
We first notice that even if we use the weak formulation in terms of $\sqrt{c_i}$ and $m_i$ as in \cite{HuJueTz22}, the mass fractions $c_i$ of a weak solution satisfy the regularity stated in Definition~\ref{def:def-weak} and especially the strong $L^2$-continuity.
\begin{lemma}\label{lemma}
    Let $\cc$ be a weak solution such that $c_i\in L^\infty(0,T;L^\infty(\Omega))$ and $m_i\in L^2(0,T;L^2(\Omega))$. Then, it follows that
    \begin{equation}
        \del_tc_i\in L^2(0,T;H^1(\Omega)^*),
    \end{equation} and thus \begin{equation} \label{continuity}
        c_i\in C([0,T];L^2(\Omega)).
    \end{equation}
\end{lemma}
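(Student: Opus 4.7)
The plan is to read off the distributional time derivative from the weak formulation \eqref{wk-form} and then invoke the classical Lions--Magenes continuity theorem to deduce strong $L^2$ continuity.

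First, I would note that \eqref{wk-form} asserts that for every $\varphi_i \in C_c^1(\overline{\Omega}\times(0,T))$,
\[
\int_0^T \int_\Omega c_i \del_t \varphi_i \,\dx\,\dt = -\int_0^T \int_\Omega \sqrt{c_i}\, m_i \cdot \nabla \varphi_i \,\dx\,\dt.
\]
Since the test functions are free to be nonzero on $\del\Omega$, this identity encodes both the PDE and the no-flux boundary condition \eqref{BC}. Equivalently, $\del_t c_i = -\diver(\sqrt{c_i}\, m_i)$ in $\mathcal{D}'(0,T;(H^1(\Omega))^*)$, with the divergence understood as a continuous functional on $H^1(\Omega)$ (not requiring a vanishing trace).

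Second, I would verify the integrability of the flux: by hypothesis $c_i \in L^\infty(0,T;L^\infty(\Omega))$, so $\sqrt{c_i}$ is essentially bounded, while $m_i \in L^2(0,T;L^2(\Omega))$, yielding $\sqrt{c_i}\, m_i \in L^2(0,T;L^2(\Omega;\R^3))$. Then for a.e.\ $t$ and any $\varphi \in H^1(\Omega)$,
\[
\Bigl| \int_\Omega \sqrt{c_i}\, m_i(\cdot,t) \cdot \nabla \varphi \,\dx \Bigr| \leq \|\sqrt{c_i}\, m_i(\cdot,t)\|_{L^2(\Omega)}\, \|\varphi\|_{H^1(\Omega)},
\]
so that $\del_t c_i(\cdot,t) \in (H^1(\Omega))^*$ with operator norm controlled by $\|\sqrt{c_i}\, m_i(\cdot,t)\|_{L^2(\Omega)}$, which is square integrable in $t$. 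This delivers the first assertion $\del_t c_i \in L^2(0,T;(H^1(\Omega))^*)$.

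Third, I would combine the regularity $c_i \in L^2(0,T;H^1(\Omega))$ (which is part of Definition~\ref{def:def-weak}) with the bound on $\del_t c_i$ just obtained, and apply the Lions--Magenes continuity theorem for the Gelfand triple $H^1(\Omega) \hookrightarrow L^2(\Omega) \hookrightarrow (H^1(\Omega))^*$ to conclude $c_i \in C([0,T];L^2(\Omega))$. A side benefit that will be useful in the proof of Theorem~\ref{theorem} is that the energy-type identity $\tfrac{1}{2}\tfrac{d}{dt}\|c_i\|_{L^2}^2 = \langle \del_t c_i, c_i\rangle_{(H^1)^*,H^1}$ holds in the sense of absolutely continuous functions of time.

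I do not anticipate a serious obstacle. The only subtlety worth flagging is the choice of the dual space: since the test functions in \eqref{wk-form} are not required to vanish on $\del\Omega$, the natural pivot space is $H^1(\Omega)$ rather than $H^1_0(\Omega)$, in keeping with the Neumann-type no-flux condition. Beyond this observation, the lemma is a standard regularity bootstrap.
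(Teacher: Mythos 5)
Your proposal is correct and follows essentially the same route as the paper: identify $\del_t c_i = -\diver(\sqrt{c_i}\,m_i)$ from the weak formulation, bound it in $L^2(0,T;(H^1(\Omega))^*)$ using $\sqrt{c_i}\in L^\infty$ and $m_i\in L^2(0,T;L^2(\Omega))$, and then apply the Lions--Magenes theorem for the Gelfand triple $H^1(\Omega)\hookrightarrow L^2(\Omega)\hookrightarrow (H^1(\Omega))^*$. Your remark about using $H^1(\Omega)$ rather than $H^1_0(\Omega)$ as the test space, consistent with the no-flux boundary condition, is a correct reading of the paper's setup.
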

\begin{proof}
    Let $\langle\cdot,\cdot\rangle$ denote the duality pairing between the spaces $L^2(0,T;H^1(\Omega)^*)$ and $L^2(0,T;H^1(\Omega))$. Then, the computation 
\[ \begin{split}
         \int_0^T|\langle\del_tc_i,\varphi\rangle| \,\dt & = \int_0^T|\langle-\diver(c_iu_i),  \varphi\rangle| \,\dt \\
         & = \int_0^T|\langle c_iu_i,\nabla\varphi\rangle| \,\dt \\ 
         & \leq     \|\sqrt{c_i}\|_{L^\infty(0,T;L^\infty(\Omega))}\|m_i\|_{L^2(0,T;L^2(\Omega))}\|\nabla\varphi\|_{L^2(0,T;L^2(\Omega))},
     \end{split} 
\]
     shows that due to $m_i$ being in $L^2(0,T;L^2(\Omega))$, \[ \del_t c_i\in L^2(0,T;(H^1(\Omega)^*). \] Moreover, since 
     \[
     c_i\in L^2(0,T;H^1(\Omega)) \quad \textnormal{ and } \quad \del_tc_i\in L^2(0,T;H^1(\Omega)^*), 
     \] 
     and since
     \[ H^1(\Omega)\hookrightarrow L^2(\Omega)\hookrightarrow H^1(\Omega)^*
     \] 
     is a Gelfand evolution triple (by continuous and dense embedding), \eqref{continuity} follows  by Lions-Magenes interpolation theory.
     \end{proof}
     We can now give the proof of the main result of this paper.
\begin{proof}[Proof of Theorem~\ref{theorem}]   

In the weak formulation \eqref{wk-form} choose $\varphi_i(x,t)=(\eta_\sigma(t)\ln(c_i^\epsilon(x,t)+\delta))^\epsilon$, where $\eta_\sigma$ is the cut-off in time defined in \eqref{cut-off} for $0<\sigma<\frac{T}{4}$ and $0<\delta<1$, a constant. Moreover, for $0<\epsilon<\sigma$ we define $v^\epsilon$ to be the mollification of $v$ in time, i.e. \[ v^\epsilon(x,t):= \int_0^T\rho_\epsilon(t-\tau)v(x,\tau) \,\dtau, \] where $\rho$ is a standard symmetric mollifier. Defining mollification this way, we assume that the function $v$ is extended by zero outside $[0,T]$. This gives the usual convergence $v^\epsilon\to v$ in the interior of $[0,T]$ and eventually problems on the boundary; this is why we work with the cut-off in time. We note that admissible test functions belong to the space $C^1_c(\overline{\Omega}\times(0,T))$, but by a density argument it is enough to consider them in $H^1(0,T;H^1(\Omega))$.

We first notice that \begin{equation} \label{mol}
    \int_0^T\int_\Omega v\,\del_t \phi^\epsilon \, \dx\dt = \int_0^T\int_\Omega v^\epsilon \,\del_t\phi \,\dx\dt,
\end{equation} for $\phi$ such that $\phi(x,0)=\phi(x,T)=0$.

Indeed, we have:
\begin{align*}
    \int_0^T\int_\Omega v(x,t)\del_t \phi^\epsilon(x,t) \, \dx\dt & = \int_0^T\int_\Omega v(x,t) \left(\int_0^T \del_t\rho_\epsilon(t-\tau) \phi(x,\tau) \,\dtau \right) \, \dx\dt \\
    & = - \int_0^T\int_\Omega v(x,t) \left(\int_0^T \del_\tau \rho_\epsilon(t-\tau) \phi(x,\tau) \,\dtau \right) \, \dx\dt \\
    & = \int_0^T\int_\Omega v(x,t) \left(\int_0^T  \rho_\epsilon(t-\tau) \del_\tau\phi(x,\tau) \,\dtau \right) \, \dx\dt \\
    & = \int_0^T\int_\Omega \del_\tau\phi(x,\tau) \left(\int_0^T  \rho_\epsilon(\tau-t) v(x,t)  \,\dt \right) \, \dx\dtau \\
    & = \int_0^T\int_\Omega v^\epsilon(x,t)\del_t\phi(x,t) \, \dx\dt,
\end{align*}

\noindent
where in the second equality we used $\del_t\rho_\epsilon(t-\tau) = -\del_\tau \rho_\epsilon(t-\tau)$, in the third one an integration by parts combined with $\phi(x,0)=\phi(x,T)=0$ and in the fourth equality we used Fubini-Tonelli and the fact that $\rho$ is a symmetric mollifier.

Then, because of \eqref{mol} and since $\eta_\sigma(T)\ln(c_i^\epsilon(T)+\delta)=\eta_\sigma(0)\ln(c_i^\epsilon(0)+\delta)=0$, the first term of \eqref{wk-form} yields:

\[ \begin{split}
    \int_0^T\int_\Omega c_i\del_t\varphi_i \,\dx \dt & = \int_0^T\int_\Omega c_i^\epsilon\del_t\left[\eta_\sigma(t)\ln(c_i^\epsilon+\delta)\right] \,\dx \dt \\
    & = \int_0^T\int_\Omega c_i^\epsilon \eta_\sigma'(t)\ln(c_i^\epsilon+\delta) \,\dx \dt + \int_0^T\int_\Omega c_i^\epsilon\eta_\sigma(t)\del_t\ln(c_i^\epsilon+\delta) \,\dx \dt,
    \end{split} \]

    \noindent
    where \[ \begin{split}
        \int_0^T\int_\Omega c_i^\epsilon \eta_\sigma'(t)\ln(c_i^\epsilon+\delta) \,\dx \dt & = \frac{1}{\sigma}\int_\sigma^{2\sigma}\int_\Omega c_i^\epsilon\ln{(c_i^\epsilon+\delta)}\,\dx\dt - \frac{1}{\sigma}\int_{T-2\sigma}^{T-\sigma}\int_\Omega c_i^\epsilon\ln{(c_i^\epsilon+\delta)}\,\dx\dt,
    \end{split} \]

    \noindent
    and
    
    \[ \begin{split}
    \int_0^T \int_\Omega c_i^\epsilon \eta_\sigma(t) \del_t\ln(c_i^\epsilon+\delta) \,\dx\dt & = \int_0^T \int_\Omega c_i^\epsilon \eta_\sigma(t) \frac{\del_t c_i^\epsilon}{c_i^\epsilon+\delta} \,\dx\dt \\
    & = \int_0^T \int_\Omega \eta_\sigma(t) \del_tc_i^\epsilon \,\dx\dt -\delta \int_0^T \int_\Omega \eta_\sigma(t) \del_t\ln(c_i^\epsilon+\delta)\,\dx\dt \\
    & = -\int_0^T \int_\Omega \eta_\sigma'(t) c_i^\epsilon \,\dx\dt + \delta \int_0^T \int_\Omega \eta_\sigma'(t)\ln(c_i^\epsilon+\delta)\,\dx\dt \\
    & = - \frac{1}{\sigma}\int_\sigma^{2\sigma}\int_\Omega (c_i^\epsilon-\delta\ln{(c_i^\epsilon+\delta)}) \,\dx\dt \\
    & \phantom{xx} + \frac{1}{\sigma}\int_{T-2\sigma}^{T-\sigma}\int_\Omega (c_i^\epsilon-\delta\ln{(c_i^\epsilon+\delta)})\,\dx\dt.
\end{split} \]

The second term of \eqref{wk-form} reads \[ \begin{split}
    \int_0^T\int_\Omega c_iu_i \cdot \nabla\varphi_i \,\dx\dt & = \int_0^T\int_\Omega (c_iu_i)^\epsilon \eta_\sigma(t) \cdot \nabla\ln{(c_i^\epsilon+\delta)} \,\dx\dt \\
    & = \int_0^T\int_\Omega (c_iu_i)^\epsilon \eta_\sigma(t) \cdot \frac{\nabla c_i^\epsilon}{c_i^\epsilon+\delta} \,\dx\dt,
\end{split} \]

\noindent
and putting everything together we arrive at \begin{equation}
    \label{(1)} \begin{split}
        & \frac{1}{\sigma}\int_\sigma^{2\sigma}\int_\Omega c_i^\epsilon\ln{(c_i^\epsilon+\delta)} \,\dx\dt - \frac{1}{\sigma}\int_{T-2\sigma}^{T-\sigma}\int_\Omega c_i^\epsilon\ln{(c_i^\epsilon+\delta)} \,\dx\dt \\
        & \phantom{x} - \frac{1}{\sigma}\int_\sigma^{2\sigma}\int_\Omega (c_i^\epsilon-\delta\ln{(c_i^\epsilon+\delta)}) \,\dx\dt + \frac{1}{\sigma}\int_{T-2\sigma}^{T-\sigma}\int_\Omega (c_i^\epsilon-\delta\ln{(c_i^\epsilon+\delta)}) \,\dx\dt \\
        & \phantom{xx} + \int_0^T\int_\Omega (c_iu_i)^\epsilon \eta_\sigma(t) \cdot \frac{\nabla c_i^\epsilon}{c_i^\epsilon+\delta} \,\dx\dt = 0.
    \end{split}
\end{equation}

The plan is to first let $\epsilon\to0^+$ keeping $\sigma>0$ and $\delta>0$ fixed, then keeping $\delta>0$ fixed take the limit $\sigma\to 0^+$ and finally let $\delta\to0^+$.

\noindent
\underline{Step 1}: Keeping $\sigma>0$ and $\delta>0$ fixed, we pass to the limit $\epsilon\to0^+$. For the first two terms, due to mollification properties, $0\leq c_i \leq 1$ and $\Omega$ being bounded, we have 

\[ \begin{split}
    & c_i^\epsilon \ln{(c_i^\epsilon+\delta)} \to c_i \ln{(c_i+\delta)} ~ \textnormal{almost everywhere in } \Omega \times [0,T] \\
    & |c_i^\epsilon \ln{(c_i^\epsilon+\delta)}| \leq \max\{\ln{(1+\delta)}, |\ln{\delta}|\} \in L^1(0,T;L^1(\Omega)),
\end{split} \]
so that by the Dominated Convergence Theorem we can pass to the limit. The next two terms are treated in a similar way.

For the last term, we have
\[ 
\begin{split}
    & \int_0^T\int_\Omega (c_iu_i)^\epsilon \eta_\sigma \cdot \frac{\nabla c_i^\epsilon}{c_i^\epsilon+\delta} \,\dx\dt - \int_0^T\int_\Omega c_iu_i \eta_\sigma \cdot \frac{\nabla c_i}{c_i+\delta} \,\dx\dt \\
    & \phantom{xx} = \int_0^T\int_\Omega [(c_iu_i)^\epsilon - c_iu_i] \eta_\sigma \cdot \frac{\nabla c_i^\epsilon}{c_i^\epsilon+\delta} \,\dx\dt + \int_0^T\int_\Omega c_iu_i \eta_\sigma \cdot \frac{\nabla c_i^\epsilon - \nabla c_i}{c_i^\epsilon+\delta} \,\dx\dt \\
    & \phantom{xxxx} + \int_0^T\int_\Omega c_iu_i \eta_\sigma \cdot \nabla c_i \left(\frac{1}{c_i^\epsilon+\delta}-\frac{1}{c_i+\delta}\right) \,\dx\dt,
\end{split} 
\] 

\noindent
and notice that $c_iu_i$ and $\nabla c_i$ are both in $L^2(0,T;L^2(\Omega))$, due to $\sqrt{c_i}u_i$ and $\nabla\sqrt{c_i}$ being in $L^2(0,T;L^2(\Omega))$ and $c_i\in L^\infty(0,T;L^\infty(\Omega))$. This implies that $(c_iu_i)^\epsilon \to c_iu_i$ and $\nabla c_i^\epsilon = (\nabla c_i)^\epsilon \to \nabla c_i$ as $\epsilon\to0$, both in $L^2(0,T;L^2(\Omega))$. Then, due to the bounds $|\eta_\sigma(t)|\leq1$ and $\frac{1}{c_i^\epsilon+\delta}\leq \frac{1}{\delta}$, we have

\[ \begin{split}
    \left| \int_0^T\int_\Omega [(c_iu_i)^\epsilon - c_iu_i] \eta_\sigma \cdot \frac{\nabla c_i^\epsilon}{c_i^\epsilon+\delta} \,\dx\dt \right| & \leq \frac{1}{\delta}\int_0^T\int_\Omega |(c_iu_i)^\epsilon - c_iu_i| |\nabla c_i^\epsilon| \,\dx\dt \\
    & \leq \frac{1}{\delta} \|(c_iu_i)^\epsilon - c_iu_i\|_{L^2(0,T;L^2(\Omega))}\|\nabla c_i\|_{L^2(0,T;L^2(\Omega))} \\
    & \to 0, \textnormal{ as $\epsilon\to0$,}
\end{split} \]

\noindent
and similarly
\[ \begin{split}
     \left| \int_0^T\int_\Omega c_iu_i \eta_\sigma \cdot \frac{\nabla c_i^\epsilon - \nabla c_i}{c_i^\epsilon+\delta} \,\dx\dt \right| & \leq \frac{1}{\delta}\int_0^T\int_\Omega |c_iu_i||\nabla c_i^\epsilon-\nabla c_i| \,\dx\dt \\
     & \leq \frac{1}{\delta}\|c_iu_i\|_{L^2(0,T;L^2(\Omega))}\|(\nabla c_i)^\epsilon - \nabla c_i\|_{L^2(0,T;L^2(\Omega))} \\
     & \to 0, \textnormal{ as $\epsilon\to0$}.
\end{split} \]

For the third term, we apply the Dominated Convergence Theorem, since
\[ 
c_iu_i \eta_\sigma \cdot \nabla c_i \left(\frac{1}{c_i^\epsilon+\delta}-\frac{1}{c_i+\delta}\right) \to 0 \textnormal{ a.e. in $\Omega\times(0,T)$, as $\epsilon\to0$,} 
\]
\noindent
and
\[ 
\begin{split}
    \left| c_iu_i \eta_\sigma \cdot \nabla c_i \left(\frac{1}{c_i^\epsilon+\delta}-\frac{1}{c_i+\delta}\right) \right| & = |c_iu_i||\eta_\sigma||\nabla c_i|\frac{|c_i-c_i^\epsilon|}{|(c_i+\delta)(c_i^\epsilon+\delta)|} \\
    & \leq \frac{2}{\delta^2} |c_iu_i| |\nabla c_i| \in L^1(0,T;L^1(\Omega)),
\end{split}
\]

\noindent
thus establishing \[ \int_0^T\int_\Omega (c_iu_i)^\epsilon \eta_\sigma \cdot \frac{\nabla c_i^\epsilon}{c_i^\epsilon+\delta} \,\dx\dt \to \int_0^T\int_\Omega c_iu_i \eta_\sigma \cdot \frac{\nabla c_i}{c_i+\delta} \,\dx\dt. \]

Therefore, letting $\epsilon\to 0^+$ in \eqref{(1)}, we obtain \begin{equation}
    \label{(2)} \begin{split}
        & \frac{1}{\sigma}\int_\sigma^{2\sigma}\int_\Omega (c_i+\delta)\ln{(c_i+\delta)} \,\dx\dt - \frac{1}{\sigma}\int_{T-2\sigma}^{T-\sigma}\int_\Omega (c_i+\delta)\ln{(c_i+\delta)} \,\dx\dt \\
        & \phantom{x} - \frac{1}{\sigma}\int_\sigma^{2\sigma}\int_\Omega c_i \,\dx\dt + \frac{1}{\sigma}\int_{T-2\sigma}^{T-\sigma}\int_\Omega c_i \,\dx\dt + \int_0^T\int_\Omega c_iu_i \eta_\sigma(t) \cdot \frac{\nabla c_i}{c_i+\delta} \,\dx\dt = 0.
    \end{split}
\end{equation}

\noindent\underline{Step 2}: Now keeping $\delta>0$ fixed, let $\sigma\to0^+$. We start with the third term and see that \[ \frac{1}{\sigma}\int_\sigma^{2\sigma}\int_\Omega c_i \,\dx\dt \to \int_\Omega c_i^0 \,\dx. \]

\noindent
The reason is that
\begin{align*}
    \left| \frac{1}{\sigma}\int_\sigma^{2\sigma}\int_\Omega c_i \,\dx\dt - \int_\Omega c_i^0 \,\dx \right| & = \left| \frac{1}{\sigma}\int_\sigma^{2\sigma}\int_\Omega (c_i - c_i^0) \,\dx \dt\right| \\
    & \leq \frac{1}{\sigma}\int_\sigma^{2\sigma}\int_\Omega |c_i - c_i^0| \,\dx \dt \\
    & = \frac{1}{\sigma}\int_\sigma^{2\sigma}\|c_i - c_i^0\|_{L^1(\Omega)} \,\dt, 
\end{align*}
which converges to zero as $\sigma\to0$ since the integrand is continuous due to $c_i\in C([0,T];L^2(\Omega))$. The fourth term is treated the exact same way.

For the first term of \eqref{(2)}, we use the Mean Value Theorem to write: 
\begin{align*}
    & \left| \frac{1}{\sigma}\int_\sigma^{2\sigma}\int_\Omega (c_i+\delta)\ln{(c_i+\delta)} \,\dx\dt - \int_\Omega (c_i^0+\delta)\ln{(c_i^0+\delta)} \,\dx \right| \\
    & \phantom{xxxxxx} \leq \frac{1}{\sigma}\int_\sigma^{2\sigma}\int_\Omega \left| (c_i+\delta)\ln{(c_i+\delta)} - (c_i^0+\delta)\ln{(c_i^0+\delta)} \right| \,\dx \dt \\
    & \phantom{xxxxxx} = \frac{1}{\sigma}\int_\sigma^{2\sigma}\int_\Omega |\ln{(\xi+\delta)}+1||c_i - c_i^0| \,\dx \dt \\
    & \phantom{xxxxxx} \leq M(\delta)\frac{1}{\sigma}\int_\sigma^{2\sigma} \|c_i - c_i^0\|_{L^1(\Omega)} \,\dx \dt,
\end{align*} where $\xi$ lies between $c_i$ and $c_i^0$ and thus $0<\xi<1$ and $M(\delta)$ is a positive constant which only depends on $\delta$. Then, taking the limit $\sigma\to0^+$ we conclude that \[ \frac{1}{\sigma}\int_\sigma^{2\sigma}\int_\Omega (c_i+\delta)\ln{(c_i+\delta)} \,\dx\dt \to \int_\Omega (c_i^0+\delta)\ln{(c_i^0+\delta)} \,\dx, \] and similarly for the second term of \eqref{(2)}.

For the last term of \eqref{(2)}, since $\eta_\sigma(t) \to 1$ almost everywhere in $(0,T)$ we see that \[ c_iu_i \eta_\sigma(t) \cdot \frac{\nabla c_i}{c_i+\delta} \to c_iu_i \cdot \frac{\nabla c_i}{c_i+\delta} ~ \textnormal{almost everywhere in } \Omega\times(0,T), \]
\noindent
and again due to the bounds $|\eta_\sigma(t)|\leq1$ and $\frac{1}{c_i^\epsilon+\delta}\leq \frac{1}{\delta}$ and the fact that $c_iu_i$ and $\nabla c_i$ are in $L^2(0,T;L^2(\Omega))$ we obtain \[ \left|c_iu_i \eta_\sigma(t) \cdot \frac{\nabla c_i}{c_i+\delta}\right| \leq \frac{1}{\delta}|c_iu_i||\nabla c_i| \in L^1(0,T;L^1(\Omega)). \] and Lebesgue's Dominated Convergence Theorem can be applied.

Then, passing to the limit $\sigma\to0^+$ in \eqref{(2)} gives \begin{equation}
    \label{(3)} \begin{split}
        & \int_\Omega (c_i^0+\delta)\ln{(c_i^0+\delta)} \,\dx - \int_\Omega (c_i(T)+\delta)\ln{(c_i(T)+\delta)} \,\dx \\
        & \phantom{xx} - \int_\Omega c_i^0 \,\dx + \int_\Omega c_i(T) \,\dx + \int_0^T\int_\Omega c_iu_i \cdot \frac{\nabla c_i}{c_i+\delta} \,\dx\dt = 0  .
    \end{split}
\end{equation}

\noindent\underline{Step 3}: We remove $\delta$, needed to regularize the logarithm, by letting $\delta\to0^+$. The first term gives 
\[ 
\int_\Omega (c_i^0+\delta)\ln(c_i^0+\delta) \,\dx \to\int_\Omega c_i^0\ln c_i^0 \,\dx,
\]
by the Dominated Convergence Theorem, since $(c_i^0+\delta)\ln(c_i^0+\delta)\to c_i^0\ln c_i^0$ almost everywhere as $\delta\to0^+$ and $|(c_i^0+\delta)\ln(c_i^0+\delta)|\leq C$ uniformly in $\delta$ for some constant $C>0$ independent of $\delta$, since $0\leq c_i^0 \leq 1$ and $0<\delta<1$ and constants are integrable in $\Omega$. The second term is treated similarly.

Finally, for the last term we obtain 
\[ \int_0^T\int_\Omega c_iu_i\cdot\frac{\nabla c_i}{c_i+\delta} \,\dx \dt\to 2\int_0^T\int_\Omega m_i\cdot\nabla\sqrt{c_i} \,\dx \dt, 
\] 
by the Dominated Convergence Theorem, since for $m_i=\sqrt{c_i}u_i$
\[ c_iu_i\cdot\frac{\nabla c_i}{c_i+\delta}\to 2m_i\cdot\nabla \sqrt{c_i} \quad \textnormal{ almost everywhere as $\delta\to0^+$}, 
\] and 
\[ 
\begin{split}
    \left|c_iu_i\cdot\frac{\nabla c_i}{c_i+\delta}\right| & = |m_i||2\nabla\sqrt{c_i}|\frac{c_i}{c_i+\delta} \\
    & \leq |m_i||2\nabla\sqrt{c_i}|\in L^1(0,T;L^1(\Omega)),
\end{split} 
\] 
since by the regularity of weak solutions $m_i$ and $\nabla\sqrt{c_i}$ are in $L^2(0,T;L^2(\Omega))$ and the bound is uniform in $\delta$.

Therefore, letting $\delta\to0^+$ we obtain the entropy equality \begin{equation} \label{eneq}
\int_\Omega c_i(T)(\ln c_i(T)-1) \,\dx -2\int_0^T\int_\Omega m_i\cdot\nabla \sqrt{c_i} \,\dx \dt=\int_\Omega c_i^0(\ln c_i^0-1) \,\dx.
\end{equation} 

We sum \eqref{eneq} over all $i\in\{1,\dots,n\}$ and use the definition of the entropy \eqref{entropy}, to arrive at

\[ H(\cc(T)) - 2\sum_{i=1}^n\int_0^T\int_\Omega m_i\cdot\nabla \sqrt{c_i} \,\dx \dt = H(\cc^0). \]

Performing the following computation, which derives from 
\eqref{alg-system} and the symmetry of $D_{ij}$
\[ \begin{split}
    - \sum_{i=1}^n2m_i\cdot\nabla \sqrt{c_i} & = \sum_{i=1}^n\sum_{j=1}^nA_{ij}m_jm_i \\
    & = - \sum_{i=1}^n\sum_{j\ne i}\frac{1}{D_{ij}}c_iu_i\cdot c_ju_j + \sum_{i=1}^n\sum_{j\ne i}\frac{1}{D_{ij}}c_jm_i^2 \\
    & = \frac{1}{2}\sum_{i=1}^n\sum_{j\ne i}\frac{c_ic_j}{D_{ij}}|u_i-u_j|^2,
\end{split} \] we obtain \begin{equation}
    \label{final} H(\cc(T)) + \frac{1}{2} \int_0^T\int_\Omega \sum_{i=1}^n\sum_{j\ne i}\frac{c_ic_j}{D_{ij}}|u_i-u_j|^2 \,\dx \dt = H(\cc^0).
\end{equation}

\noindent
The fact that $T>0$ is arbitrary, implies that \eqref{final} holds for all times $t\in(0,T]$ and the proof is complete.
\end{proof}

\section*{Acknowledgments}
LCB acknowledges support by INdAM GNAMPA and also by MIUR,
within project PRIN20204NT8W4$_{-}$004: Nonlinear evolution PDEs,
fluid dynamics and transport equations: theoretical foundations and applications. LCB also thanks King Abdullah University of
Science and Technology (KAUST) for the support and hospitality during the preparation of the paper. The research of SG and AET was partially supported by King Abdullah University of Science and Technology (KAUST) baseline funds. SG also acknowledges partial support from the Austrian Science Fund (FWF), grants P33010 and F65. This work has received funding from the European Research Council (ERC) under the European Union's Horizon 2020 research and innovation programme, ERC Advanced Grant no. 101018153.

\end{document}